\newcommand{\RR}{\mathbb{R}}
\newcommand{\CC}{\mathbb{C}}
\newcommand{\NN}{\mathbb{N}}
\newcommand{\cnj}[1]{\overline{#1}} 
\newcommand{\ds}{\displaystyle}
\newcommand{\kron}{\otimes}
\noindent\begin{description}\item[\fbox{Problem #1}]}
\newenvironment{proof}
   {\begin{pf*}{Proof.}}
   {\hfill $\Box$\end{pf*}}
\newcommand{\mtxa}[2]{
\left[
\begin{array}{#1}
#2
\end{array}
\right]}
\begin{document}

\begin{frontmatter}

\title{Polynomial two-parameter eigenvalue problems and matrix
 pencil methods for stability of delay-differential equations}



\author[Elias]{Elias Jarlebring}\ead{e.jarlebring tu-bs.de},               
\thanks[Michielthanks]{The research of this author was supported in part by NSF grant DMS-0405387}
\author[Michiel]{Michiel~E.~Hochstenbach\thanksref{Michielthanks}}
\ead[url]{http://www.win.tue.nl/$\sim$hochsten/},    

\address[Elias]{Technische Universität Braunschweig, Institut Computational Mathematics, 38023 Braunschweig, Germany}             
\address[Michiel]{Department of Mathematics and Computing Science, Eindhoven University of Technology, PO Box 513, 5600 MB, The Netherlands.}


\begin{abstract}
Several recent methods used to analyze {\em asymptotic stability} of {\em
delay-differential equations} (DDEs) involve determining the eigenvalues of a
matrix, a matrix pencil or a {\em matrix polynomial} constructed by {\em Kronecker
products}. 
Despite some similarities between the different types of these so-called {\em
matrix pencil methods}, the general ideas used as well as the proofs  differ
considerably. Moreover, the available theory hardly reveals the relations
between the different methods.

In this work, a different derivation of various matrix pencil methods is presented
using a unifying framework of a new type of eigenvalue problem:
the {\em polynomial two-parameter eigenvalue problem}, of which the
{\em quadratic two-parameter eigenvalue problem} is a special case.
This framework makes it possible to establish relations between various seemingly
different methods and provides further insight in the theory of matrix pencil methods.

We also recognize a few new matrix pencil variants to determine
DDE stability.
Finally, the recognition of the new types of eigenvalue problem opens
a door to efficient computation of DDE stability.
\end{abstract} 

\begin{keyword}
Delay-differential equations \sep two-parameter eigenvalue problem
\sep multiparameter eigenvalue problem
\sep critical delays \sep robustness \sep stability
\sep asymptotic stability \sep companion
form \sep quadratic eigenvalue problem \sep polynomial eigenvalue problem
\sep quadratic two-parameter eigenvalue problem
\sep polynomial two-parameter eigenvalue problem.
\end{keyword}
\end{frontmatter}

\section{Introduction}
Mathematical models consisting of {\em delay-differential equations (DDEs)}, in the
simplest form
\begin{equation}
  \dot{x}(t)=A_0x(t)+A_1x(t-\tau),
  \quad A_0, A_1\in\CC^{n\times n},
  \quad \tau\ge 0, \label{dde0}
\end{equation}
occur naturally in a wide variety of fields related to applied
mathematics, such as engineering, control theory, biology,
traffic modeling, neural networks, mechanics and electronic circuits.
For most applications, it is
desired that $x(t)\rightarrow 0$ as $t\rightarrow\infty$ for any bounded initial condition. This property is referred to as {\em
asymptotic stability}. If a DDE is not asymptotically 
stable, the linear model typically breaks down or the modeled system
has unwanted properties, such as oscillations or the energy content of
the system 
is unbounded in time causing the modeled physical object to break
 or at least turn inefficient. Clearly, asymptotic stability is important
 in practice and numerical and analytical tools to analyze asymptotic
 stability of DDEs is a popular topic of research. 
For instance, large parts of
 several monographs, which are standard references in the field of DDEs,  deal with
 stability of 
 DDEs, e.g., the books of  
Bellman \cite{Bellman:1963:DIFFERENTIAL},  
Niculescu \cite{Niculescu:2001:DELAY}, Michiels and Niculescu
\cite{Michiels:2007:STABILITYBOOK} and 
Gu, Kharitonov, and Chen \cite{Gu:2003:STABILITY}. See also the survey papers
\cite{Richard:2003:SUMMARY}, \cite{Gu:2003:SURVEY}, and
\cite{Kharitonov:1999:ROBUST}. This paper is concerned with asymptotic stability for
\eqref{dde0} as well as more general DDEs, e.g., DDEs with delays in the
derivative (called {\em neutral DDEs}) and DDEs with 
multiple delays. These more general DDEs will be considered later
in this paper. We mention that DDEs that are not of neutral type
are also called {\em retarded}.

Asymptotic stability is often described using the solutions of the
characteristic equation associated with \eqref{dde0}:
\[
   \det(-\lambda I+A_0+A_1e^{-\tau \lambda})=0,
\]
of which the solutions $\lambda$ are called eigenvalues; the set of
eigenvalues is called the {\em spectrum}.
The DDE \eqref{dde0} is asymptotically stable if the spectrum is
contained in the open left half plane
(see, e.g., \cite[Prop.~1.6]{Michiels:2007:STABILITYBOOK}).

We will also consider more general classes of DDEs in this paper.
For some of these DDEs, in particular neutral DDEs, it is not
sufficient that all eigenvalues have negative real parts to ensure
asymptotic stability. A neutral DDE is asymptotically stable if and only
if the supremum of the real part of the spectrum is negative
(see, e.g., \cite[Prop.~1.20]{Michiels:2007:STABILITYBOOK}).

Because of these properties, explicit conditions such that
there is a purely imaginary eigenvalue can be very useful in a stability
analysis. In this paper we will study explicit conditions
on the delay $\tau$ such that there is at least one purely imaginary
eigenvalue. In the literature, there are several 
approaches to characterize these values of $\tau$, sometimes called 
{\em critical delays}, {\em switching delays}, {\em crossing delays}, or
{\em kernel and offspring curves}; see
\cite[Remark~3.1]{Jarlebring:2008:THESIS} for some comments on terminology. 

One approach to determine critical delays is to consider 
the eigenvalues of certain matrices or matrix pencils
constructed by Kronecker products. Methods of this type are
presented by Chen, Gu, and Nett 
\cite{Chen:1995:DELAYMARGINS} (see also
\cite[Thm.~2.13]{Gu:2003:STABILITY}), 
Louisell \cite{Louisell:2001:IMAG},
Niculescu \cite{Niculescu:1998:HYPERBOLICITY}
(see also \cite[Proposotion~4.5]{Michiels:2007:STABILITYBOOK}),
and Fu, Niculescu, and Chen \cite{Fu:2006:NEUTRAL,Niculescu:2006:DDAE}.
The works \cite{Jarlebring:2008:JCAM} 
(see also \cite[Chapter~3]{Jarlebring:2008:THESIS}),
\cite{Jarlebring:2007:ECC} and \cite{Ergenc:2007:EXTENDED} also use 
a formulation of eigenvalue problems containing Kronecker products. Even
though these 
popular methods have some characteristics in 
common, the ideas used in the derivations differ. For
instance, Louisell \cite{Louisell:2001:IMAG} derives a result for
neutral DDEs by considering a linear ODE which is proven to share imaginary eigenvalues
with the DDE whereas Chen, Gu, and Nett \cite{Chen:1995:MAXIMALDELAY} and
several other authors depart from the characteristic equation and
exploit the fact that the eigenvalues of Kronecker products are
products of the eigenvalues of the individual factor matrices.

The goals of this paper are:
\begin{itemize}
\item[(a)] to introduce a new type of eigenvalue problem, 
the {\em polynomial two-parameter eigenvalue problem}, with the
{\em quadratic two-parameter eigenvalue problem} as important special case;
\item[(b)] to show the relevance of this problem to determine
critical delays for various types of DDEs;
\item[(c)] to provide alternative derivations of existing matrix pencil methods
using the context of polynomial two-parameter eigenvalue problems;
\item[(d)] to hereby provide a new unifying framework for
the determination of critical delays;
\item[(e)] and, finally, to recognize a few new variants of
known matrix pencil methods.
\end{itemize}

For given matrices $A_i,B_i,C_i\in\CC^{n\times n}$, $i=1,2$, the (linear)
two-parameter eigenvalue problem is concerned with finding
$\lambda,\mu\in\CC$ and $x,y\in\CC^{n}\backslash\{0\}$ such that
\begin{gather}
  \label{eq:2param} 
 \left\{
  \begin{array}{lll}
    A_1x&=\lambda B_1x&+\mu C_1x, \\
    A_2y&=\lambda B_2y&+\mu C_2y.
  \end{array}
\right.
\end{gather}
There is a close connection between linear two-parameter problems and
two coupled generalized eigenvalue problems involving Kronecker products;
see \cite{Atkinson:1972:MULTIPARAMETER} and Section~\ref{sect:2param}
for further details.

In this paper, we will consider
{\em polynomial two-parameter eigenvalue problems} and show that
there are associated (one-parameter) quadratic eigenvalue problems
which are very relevant for critical delays of DDEs.
Note that the use of {\em multivariate polynomials}, which are closely
related to multiparameter eigenvalue problems, is not new in the field
of stability 
 of DDEs. Multivariate polynomials are used in, e.g.,
\cite{Kamen:1982:LINCOM}, \cite{Kamen:1980:TWOVARIABLE}, and
\cite{Hale:1985:MULTIVAR} with applications in
\cite{Chiasson:1988:METHOD}; see also the summaries in the standard
references \cite[Section~4.1.2]{Niculescu:2001:DELAY} and 
\cite[Section 4.6]{Gu:2003:STABILITY}. 
In this work, we discuss a new natural way to interpret  
matrix pencil methods in the context of two-parameter eigenvalue
problems.

The results of this  work are ordered by increasing generality. The
idea of an alternative interpretation of matrix pencil methods
using polynomial two-parameter eigenvalue problems is first illustrated
in Section~\ref{sect:illustration}. In Section~\ref{sect:2param}
we give connections between certain polynomial two-parameter
eigenvalue problems and associated quadratic and polynomial eigenvalue
problem. These links are used to derive the polynomial
eigenvalue problem occurring in matrix pencil methods for more general
types of DDEs in Section~\ref{sect:gen}.
After stating some new variants of matrix pencil methods in
Section~\ref{sect:new}, we end with some conclusions and an outlook in
Section~\ref{sec:concl}.

\section{DDEs with a single delay}
\label{sect:illustration}
An important aspect of this work is a further understanding of
matrix pencil methods. The derivation for the most general type of DDE
is somewhat technical and contain expressions difficult to interpret by
inspection. Therefore, to ease the 
presentation, it is worthwhile to first illustrate the general
ideas of the theory by considering retarded DDEs with a single delay. 

In this section we derive a polynomial two-parameter eigenvalue problem 
corresponding to purely imaginary eigenvalues of a DDE, and apply a result
that will be proved in Theorem~\ref{thm:2paramcrossterm} in Section~\ref{sect:2param}
to identify that the eigenvalue problems are the ones that occur in the matrix
pencil methods proposed in \cite{Chen:1995:DELAYMARGINS} and \cite{Louisell:2001:IMAG}. 

First, we introduce the following (usual)
notations: $\sigma(A)$ and $\sigma(A,B)$ denote the spectrum of
a matrix $A$ and matrix pencil $(A,B)$, respectively;
$I$ denotes the identity matrix,
$\otimes$ the Kronecker product and
$\oplus$ the Kronecker sum (i.e., $A\oplus B=A\otimes I+I\otimes B$).
If a DDE is stable for $\tau = 0$, then $\tau_*$ denotes the {\em delay margin},
i.e., the smallest delay $\tau$ for which the DDE is no longer stable. 

Consider the DDE
\begin{equation}
 B_0\dot{x}(t)=A_0x(t)+A_1x(t-\tau),
 \label{dde1}
\end{equation}
where $A_0,A_1,B_0\in\CC^{n\times n}$. We will rederive the eigenvalue problems
that arise in the following two matrix pencil results.
We hereby note that matrix pencil methods are generally stated in various
degrees of generality, for various types of DDEs.
Theorem~\ref{thm:louisell} below is for the slightly different
setting of neutral DDEs and $B_0=I$. 
Theorem~\ref{thm:chen} as stated here is a restriction of the original result
\cite{Chen:1995:DELAYMARGINS} to single delays.
We postpone the discussion of the more general result in
\cite{Chen:1995:DELAYMARGINS} to Section 4.

\begin{thm}[Louisell \textrm{\cite[Theorem~3.1]{Louisell:2001:IMAG}}]
\label{thm:louisell}
Let $A_0,A_1,B_1\in\RR^{n\times n}$. Then all purely imaginary eigenvalues
 of the neutral DDE 
\begin{equation}
\dot{x}(t)+B_1\dot{x}(t-\tau)=A_0x(t)+A_1x(t-\tau)\label{eq:louisell}
\end{equation}
are zeros of 
\begin{equation}
\det((\lambda I-A_0)\kron(\lambda I+A_0)-
(\lambda B_1-A_1)\otimes (\lambda B_1+A_1))=0.\label{eq:louisell:quadpoly}
\end{equation}
\end{thm}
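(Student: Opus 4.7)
My plan is to translate the neutral DDE \eqref{eq:louisell} into a quadratic two-parameter eigenvalue problem in $(\lambda,\mu)$ by setting $\mu=e^{-\tau\lambda}$, use the reality of $A_0,A_1,B_1$ together with $|\mu|=1$ to produce a second equation involving $\bar x$, and then eliminate $\mu$ by a Kronecker product identity (this is where I would invoke Theorem~\ref{thm:2paramcrossterm}).

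First I would substitute $x(t)=e^{\lambda t}v$ into \eqref{eq:louisell} to obtain the characteristic equation
$$\det\bigl((\lambda I-A_0)+(\lambda B_1-A_1)\,e^{-\tau\lambda}\bigr)=0,$$
so a purely imaginary eigenvalue $\lambda$ yields a nonzero $x\in\CC^n$ with
$$(\lambda I-A_0)x+\mu(\lambda B_1-A_1)x=0,\qquad \mu:=e^{-\tau\lambda}. \qquad (\star)$$
Since $\lambda\in i\RR$, one has $|\mu|=1$ and $\bar\lambda=-\lambda$, hence $\overline{\mu}=\mu^{-1}$. Taking the complex conjugate of $(\star)$ and using $A_0,A_1,B_1\in\RR^{n\times n}$, then multiplying through by $\mu$, produces a companion equation for $y:=\bar x\ne 0$:
$$(\lambda B_1+A_1)y+\mu(\lambda I+A_0)y=0. \qquad (\star\star)$$
Equations $(\star)$ and $(\star\star)$ form a two-parameter eigenvalue problem in which $\mu$ appears linearly and $\lambda$ quadratically after clearing, i.e., exactly the structure handled by Theorem~\ref{thm:2paramcrossterm}.

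Next I would eliminate $\mu$ with the standard Kronecker trick: setting $z:=x\otimes y$ and writing $(\star)$ as $Ax=\mu Bx$ with $A=\lambda I-A_0$, $B=A_1-\lambda B_1$, and $(\star\star)$ as $Cy=\mu Dy$ with $C=\lambda B_1+A_1$, $D=-(\lambda I+A_0)$, the identity $(A\otimes D)z=\mu(B\otimes D)z=(B\otimes C)z$ gives
$$\bigl(A\otimes D-B\otimes C\bigr)(x\otimes y)=0.$$
Substituting back and absorbing a sign yields
$$\bigl[(\lambda I-A_0)\otimes(\lambda I+A_0)-(\lambda B_1-A_1)\otimes(\lambda B_1+A_1)\bigr](x\otimes y)=0.$$
Since $x\otimes y\ne 0$, this matrix is singular, and the determinant condition \eqref{eq:louisell:quadpoly} follows.

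The step I expect to require the most care is the elimination of $\mu$: proving the Kronecker identity is immediate when the relevant pencils are regular, but one wants the conclusion to hold without invertibility hypotheses on $\lambda B_1\pm A_1$ or $\lambda I\pm A_0$. This is precisely the content of the forthcoming Theorem~\ref{thm:2paramcrossterm}, which I would cite to cover the degenerate cases. The remaining subtleties — the use of reality to conjugate $(\star)$, and the observation $|\mu|=1\Rightarrow\bar\mu=\mu^{-1}$ on the imaginary axis — are routine but essential, as they are what transform a single scalar equation in $(\lambda,\mu)$ into the coupled pair needed for Kronecker elimination.
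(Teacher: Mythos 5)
Your derivation is correct and follows essentially the same route as the paper's: form the characteristic equation, set $\mu=e^{-\tau\lambda}$, use $\bar\lambda=-\lambda$ and $\bar\mu=\mu^{-1}$ together with the reality of the matrices to obtain the conjugate companion equation for $y=\bar x$, and eliminate $\mu$ by a Kronecker product identity (this is exactly the quadratic two-parameter eigenvalue problem \eqref{eq:quad2paramneut} with $B_0=I$, whose $\lambda$-elimination via Theorem~\ref{thm:2paramcrossterm} the paper carries out in Section~\ref{sect:neutral}). One small remark: the degenerate-case worry you flag at the end is unfounded, since your identity $(A\otimes D)(x\otimes y)=(Ax)\otimes(Dy)=(\mu Bx)\otimes(Dy)=(Bx)\otimes(\mu Dy)=(B\otimes C)(x\otimes y)$ holds with no invertibility or regularity hypotheses whatsoever, so the direct computation already covers all cases without needing to invoke Theorem~\ref{thm:2paramcrossterm}.
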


\begin{thm}[Chen, Gu, and Nett; special case of
\textrm{\cite[Theorem~3.1]{Chen:1995:DELAYMARGINS}}]\label{thm:chen}
  Suppose \eqref{dde0} is stable for $\tau=0$. Define 
\begin{equation}
   U:=
\mtxa{cc}{
I & \ 0 \\
0 & \ A_1\otimes I
}
\quad \textrm{and} \quad
V:=
\mtxa{cc}{
 0   & \ I   \\
-I\otimes A_1^T & \ -A_0\oplus A_0^T
}.
\end{equation}
If the delay margin $\tau_*$ is finite and
 nonzero, then $\tau_*=\min_{k} \frac{\alpha_k}{\omega_k}$
where $\alpha_k\in [0,2\pi]$, $\omega_k > 0$, and 
$e^{-i\alpha_k}\in \sigma(V,U)$ satisfies the relation $i\omega_k\in\sigma(A_0+A_1e^{-i\alpha_k})$.
\end{thm}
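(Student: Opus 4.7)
The plan is to derive the pencil $(V,U)$ directly from the characteristic equation of \eqref{dde0} by treating the pair $(\lambda,z)=(i\omega,e^{-i\tau\omega})$ as a two-parameter unknown, and then invoke Theorem~\ref{thm:2paramcrossterm} (from Section~\ref{sect:2param}) to reduce the two-parameter problem to a standard generalized eigenvalue problem.

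First I would observe that $\lambda=i\omega$ with $\omega>0$ is an eigenvalue of \eqref{dde0} at delay $\tau$ iff there exists a nonzero $x$ with
\begin{equation*}
(A_0+zA_1)\,x=i\omega\,x,\qquad z:=e^{-i\tau\omega}.
\end{equation*}
Since $A_0,A_1$ are real and $|z|=1$, taking complex conjugates (and transposes) of this equation produces a companion relation for $-i\omega$: there exists a nonzero $y$ with
\begin{equation*}
(A_0^{\mathrm T}+z^{-1}A_1^{\mathrm T})\,y=-i\omega\,y.
\end{equation*}
These two equations form a polynomial (in fact linear-in-$\lambda$, quadratic-after-clearing-$z$) two-parameter eigenvalue problem in the unknowns $\lambda=i\omega$ and $z=e^{-i\tau\omega}$, with the extra constraint that the eigenvalues in the two equations are negatives of each other. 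Theorem~\ref{thm:2paramcrossterm} gives precisely the tool to eliminate $\lambda$ by forming a Kronecker sum on the tensor product $w:=x\otimes y$, yielding
\begin{equation*}
\bigl((A_0+zA_1)\oplus(A_0^{\mathrm T}+z^{-1}A_1^{\mathrm T})\bigr)w=0,
\end{equation*}
which, after multiplying by $z$, is the quadratic eigenvalue problem
\begin{equation*}
\bigl(z^{2}(A_1\otimes I)+z(A_0\oplus A_0^{\mathrm T})+(I\otimes A_1^{\mathrm T})\bigr)w=0.
\end{equation*}

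The next step is the standard companion linearization: setting $v:=zw$ converts the quadratic pencil into the linear pencil $V-zU$ with the exact $U$ and $V$ displayed in the statement. Hence every purely imaginary eigenvalue $\lambda=i\omega$ of \eqref{dde0} at delay $\tau$ produces a unimodular generalized eigenvalue $z=e^{-i\tau\omega}\in\sigma(V,U)$, and conversely every unimodular $z=e^{-i\alpha}\in\sigma(V,U)$ for which the associated tensor-product eigenvector splits yields a value $\omega$ with $i\omega\in\sigma(A_0+A_1 e^{-i\alpha})$; this gives the asserted pairing between $\alpha_k\in[0,2\pi]$ and $\omega_k>0$. The delay relation $\tau=\alpha/\omega$ then follows from $z=e^{-i\tau\omega}=e^{-i\alpha}$.

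Finally I would invoke the standard continuity-of-spectrum argument for retarded DDEs: under the hypothesis that \eqref{dde0} is asymptotically stable at $\tau=0$, the rightmost eigenvalue depends continuously on $\tau$, so loss of stability at a finite nonzero $\tau_\ast$ must occur precisely when an eigenvalue first touches the imaginary axis. Therefore $\tau_\ast$ is the smallest positive $\tau$ for which some pair $(\omega,\alpha)$ as above satisfies $\tau\omega=\alpha$, which is exactly $\tau_\ast=\min_k\alpha_k/\omega_k$. The main obstacle, in my view, is the delicate bookkeeping in the second step: one must verify that the Kronecker-product construction introduces no spurious unimodular eigenvalues besides those coming from genuine $(i\omega,e^{-i\tau\omega})$ pairs, and that the tensor-product eigenvector $w=x\otimes y$ is recoverable from a generic eigenvector of $(V,U)$; this is precisely the content Theorem~\ref{thm:2paramcrossterm} is designed to handle, which is why the two-parameter framework makes the derivation transparent.
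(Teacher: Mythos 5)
Your derivation follows essentially the same route as the paper: the paper does not reprove the delay-margin formula (it is quoted from Chen, Gu, and Nett) but rederives the underlying quadratic eigenvalue problem \eqref{eq:chenpoly} exactly as you do, by taking the conjugate transpose of the characteristic equation \eqref{eq:chareqvect} to form the quadratic two-parameter problem \eqref{eq:quad2paramtrans} and applying the $\mu$-part of Theorem~\ref{thm:2paramcrossterm}, with $(V,U)$ as the companion linearization. Your added continuity-of-spectrum argument for $\tau_*=\min_k\alpha_k/\omega_k$ is the standard one and is consistent with what the paper defers to the original reference.
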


Theorem~\ref{thm:chen}
gives a formula for the delay
margin in terms of the solutions of the generalized
eigenvalue problem involving the pencil $(V,U)$, which represents a
linearization of the {\em quadratic eigenvalue problem} (QEP)
\begin{equation}
 \left(\mu^2(A_1\otimes I)+\mu \, (A_0\oplus A_0^T)+ I\otimes A_1^T\right)v = 0
\label{eq:chenpoly}
\end{equation}
for $\mu \in \CC$ and nonzero $v\in\CC^{n^2}$.
An exhaustive characterization of possible linearizations
was recently given in \cite{Mackey:2006:STRUCTURED}. 
Observe that the matrix polynomial \eqref{eq:louisell:quadpoly} in
Theorem~\ref{thm:louisell} also represents a quadratic eigenvalue problem;
a linearization was given in \cite{Louisell:2001:IMAG} as well.
(A linearization adapted to the quadratic eigenvalue problem
in the matrix pencil method in \cite{Jarlebring:2008:JCAM} was 
recently proposed in \cite{Fassbender:2007:PCP}.)

Both of the matrix pencil methods (Theorem~\ref{thm:louisell}
and Theorem~\ref{thm:chen}) involve quadratic
eigenvalue problems \eqref{eq:louisell:quadpoly} and
\eqref{eq:chenpoly}. However, for instance from the original proofs
of these results, there is no obvious relation between these two approaches. 
We will develop a framework that can derive both quadratic
eigenvalue problems in a unifying manner which gains further insight
in the relations between the methods.

Consider the eigenvalue problem associated with \eqref{dde1}
\begin{equation}
  \lambda B_0x=(A_0+A_1e^{-\lambda \tau}) \, x,\label{eq:chareqvect}
\end{equation}
for nonzero $x\in\CC^n$. We are  interested in the case
where there is a purely imaginary eigenvalue, say $\lambda=i\omega$. We
denote $\mu=e^{-\lambda\tau}$. Under the assumption that the eigenvalue
is imaginary, i.e., $\lambda=i\omega$, we have 
$\cnj{\lambda}=-\lambda$ and $\cnj{\mu}=\mu^{-1}$. This yields 
\begin{equation}
  -\lambda \cnj{B}_0y=(\cnj{A}_0+\mu^{-1}\cnj{A}_1) \, y,\label{eq:chareqcnj}
\end{equation}
where $y=\cnj{x}$. Hence, multiplying \eqref{eq:chareqcnj} by $\mu$ and
rearranging the terms, we have
\begin{gather}
  \label{eq:quad2param} 
 \left\{
  \begin{array}{lll}
    A_0x      &=\phantom{-}\lambda B_0x &-\mu A_1x, \\
    \cnj{A}_1y&=-\lambda\mu \cnj{B}_0y&-\mu\cnj{A}_0y.
  \end{array}
\right.
\end{gather}

Now first, for given $A_i$, $B_i$, $C_i$, $D_i$, $E_i$, and $F_i \in \CC^{n \times n}$,
$i=1,2$, consider the following {\em quadratic two-parameter eigenvalue problem}
\[
\left\{
\begin{array}{lll}
(A_1 + \lambda B_1 + \mu C_1 + \lambda^2 D_1 + \lambda \mu E_1 + \mu^2 F_1) \, x & = & 0, \\
(A_2 + \lambda B_2 + \mu C_2 + \lambda^2 D_2 + \lambda \mu E_2 + \mu^2 F_2) \, y & = & 0,
\end{array}
\right.
\]
where the assignment is to compute one or more tuples $(\lambda, \mu, x, y)$ with
nonzero $x$ and $y$. As for the linear two-parameter eigenvalue problem we will call
$(\lambda, \mu)$ an eigenvalues and $x \otimes y$ an eigenvector.
We see that \eqref{eq:quad2param} is special case of this general quadratic
two-parameter eigenvalue problem with just one nonlinear term
and one additional vanishing matrix.
As an implication of Theorem~\ref{thm:2paramcrossterm} in the next section,
we will show that the following two (one-parameter) quadratic eigenvalue
problems are associated with \eqref{eq:quad2param}:
\begin{equation}
  \left[\lambda^2(B_0\otimes \cnj{B}_0)
+\lambda(B_0\otimes \cnj{A}_0-A_0\otimes \cnj{B}_0)
+(A_1\otimes \cnj{A}_1-A_0\otimes \cnj{A}_0)
\right](x\otimes y)=0\label{eq:singlequadlambda}
\end{equation}
and
\begin{equation}
\left[\mu^2(A_1\otimes \cnj{B}_0
+\mu \, (A_0\otimes B_0+B_0\otimes \cnj{A}_0)
+(B_0\otimes \cnj{A}_1))
\right](x\otimes y)=0. \label{eq:singlequadmu}
\end{equation}

Using these QEPs, we can now rederive Theorems~\ref{thm:louisell}
and Theorem~\ref{thm:chen} as follows.
Although Theorem~\ref{thm:louisell} applies to the wider class of neutral DDEs,
we can restrict it to the class of retarded DDEs by setting
$B_1=0$. Then taking $B_0=I$ in the quadratic eigenvalue problem
\eqref{eq:singlequadlambda} exactly renders \eqref{eq:louisell:quadpoly}
in Theorem~\ref{thm:louisell}, under the assumption that the matrices are real.

Similarly, \eqref{eq:singlequadmu}
corresponds to the quadratic eigenvalue problem \eqref{eq:chenpoly} in
Theorem~\ref{thm:chen}; note that \eqref{eq:singlequadmu} gives
\eqref{eq:chenpoly} if we replace conjugation with the conjugate transpose
as follows. Instead of \eqref{eq:chareqcnj} as the conjugate of
\eqref{eq:chareqvect}, we can also take the conjugate transpose
of \eqref{eq:chareqvect}.
(In Section~\ref{sect:new}, we will exploit similar techniques to
derive new matrix pencil methods.)
The resulting quadratic two-parameter eigenvalue problem is
\begin{gather}
  \label{eq:quad2paramtrans} 
 \left\{
  \begin{array}{lll}
    A_0x  &=\phantom{-}\lambda B_0x  &-\mu A_1x, \\
    A_1^*y&=-\lambda\mu B_0^*y&-\mu A_0^*y,
  \end{array}
\right.
\end{gather}
where now $y$ is the left eigenvector of \eqref{eq:chareqvect}. The second
equation in \eqref{eq:quad2paramtrans} is the transpose of
\eqref{eq:quad2param}. Application of Theorem~\ref{thm:2paramcrossterm}
to \eqref{eq:quad2paramtrans} yields \eqref{eq:chenpoly}.

In the next section we will prove the theorem that we need
for \eqref{eq:singlequadlambda} and \eqref{eq:singlequadmu}
and also more general results.

\section{Quadratic and polynomial two-parameter eigenproblems
and associated quadratic and polynomial one-parameter eigenproblems}
\label{sect:2param}
First, we will review some facts for the (linear) two-parameter eigenvalue
problem \eqref{eq:2param}, see also \cite{Atkinson:1972:MULTIPARAMETER}.
Define the matrix determinants
\begin{eqnarray*}
    \Delta_0&=&B_1\otimes C_2-C_1\otimes B_2, \\[1mm]
    \Delta_1&=&A_1\otimes C_2-C_1\otimes A_2, \\[1mm]
    \Delta_2&=&B_1\otimes A_2-A_1\otimes B_2,
\end{eqnarray*}
where $\Delta_i\in\CC^{n^2\times n^2}$, $i=0,1,2$.
Associated with \eqref{eq:2param} are two (decoupled)
generalized eigenvalue problems (GEPs)
\begin{gather}
 \begin{array}{lll}
    \Delta_1 z&=&\lambda \Delta_0z, \\
    \Delta_2 z&=&\mu \Delta_0z,
 \end{array}
\end{gather}
where $z = x \otimes y$.
(In fact, these GEPs are equivalent with \eqref{eq:2param} if
$\Delta_0$ is nonsingular; see \cite{Atkinson:1972:MULTIPARAMETER}).
As there are two generalized eigenvalue problems which correspond to the
linear two-parameter eigenvalue problem \eqref{eq:2param}, we will see
that there are two quadratic (one-parameter) eigenvalue problems which
correspond to the quadratic two-parameter eigenvalue problem. We will see in
the derivations of the matrix pencil methods that some methods
correspond to one form and some to the other.

To be able to handle two wider classes of DDEs, we will
prove two results that deal with generalizations of
problem \eqref{eq:quad2param};
we will make use of these theorems in the derivation of matrix pencil
methods in Section~\ref{sect:gen}.

The first generalization of \eqref{eq:quad2param}, which we will
use for neutral DDEs, involves an additional cross term $\lambda\mu$:
\begin{gather}
  \label{eq:2paramnl} 
 \left\{
  \begin{array}{llll}
    A_1x&=\lambda B_1x&+\mu C_1x&+\lambda\mu D_1x,\\
    A_2y&=\lambda B_2y&+\mu C_2y&+\lambda\mu D_2y.
  \end{array}
\right.
\end{gather} 

\begin{thm}\label{thm:2paramcrossterm}
If $(\lambda,\mu)$ is a solution of \eqref{eq:2paramnl} with
corresponding eigenvector $(x,y)$ then:
\begin{itemize}
\item[1.] $\lambda$ is an eigenvalue with corresponding eigenvector
$x\otimes y$ of the QEP
\begin{multline}
\label{eq:quad2paramlambda}
\big[\lambda^2(D_1\otimes B_2-B_1\otimes D_2)
+\lambda(A_1\otimes D_2-D_1\otimes A_2+\\
-B_1\otimes C_2+C_1\otimes B_2)+ (A_1\otimes C_2-C_1\otimes A_2)
\big] \, (x\otimes y)=0.
\end{multline}
\item[2.] $\mu$ is an eigenvalue with corresponding eigenvector
$x\otimes y$ of the QEP
\begin{multline}
\big[
\mu^2(D_1\otimes C_2-C_1\otimes D_2)
+\mu \, (A_1\otimes D_2-D_1\otimes A_2+\\
-C_1\otimes B_2+B_1\otimes C_2)+ (A_1\otimes B_2-B_1\otimes A_2)
\big] \, (x\otimes y)=0.
\end{multline}
\end{itemize}
\end{thm}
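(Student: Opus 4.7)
The proof plan is to reduce each claim to the well-known linear two-parameter trick recalled at the start of Section~\ref{sect:2param}: namely, that if $M_1 x = \mu N_1 x$ and $M_2 y = \mu N_2 y$ share the scalar $\mu$, then $(M_1 \otimes N_2 - N_1 \otimes M_2)(x \otimes y) = 0$. The idea is to absorb the cross term $\lambda\mu D_i$ into one of the spectral parameters, freezing the other as part of the coefficient matrices, and then expand.

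For the first claim, I would rewrite \eqref{eq:2paramnl} by grouping the $\mu$-free and $\mu$-linear parts:
\begin{equation*}
(A_1 - \lambda B_1)\, x = \mu\,(C_1 + \lambda D_1)\, x, \qquad (A_2 - \lambda B_2)\, y = \mu\,(C_2 + \lambda D_2)\, y.
\end{equation*}
Viewed this way, the pair $(x,y)$ witnesses that $\mu$ is a common generalized eigenvalue of the two matrix pencils $(A_i - \lambda B_i,\ C_i + \lambda D_i)$, $i=1,2$ (with $\lambda$ treated as a fixed parameter in the coefficients). The standard Atkinson-style identity then gives
\begin{equation*}
\bigl[(A_1 - \lambda B_1) \otimes (C_2 + \lambda D_2) - (C_1 + \lambda D_1) \otimes (A_2 - \lambda B_2)\bigr]\,(x \otimes y) = 0.
\end{equation*}
Expanding the two Kronecker products and collecting powers of $\lambda$ should yield exactly the QEP \eqref{eq:quad2paramlambda}; the $\lambda^2$ term picks up $D_1 \otimes B_2 - B_1 \otimes D_2$ (both mixed sign contributions combine constructively), the $\lambda$ term produces the four-term combination $A_1\otimes D_2 - D_1\otimes A_2 - B_1\otimes C_2 + C_1\otimes B_2$, and the constant term is $A_1\otimes C_2 - C_1\otimes A_2$, matching $\Delta_1$ from the linear case.

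For the second claim, the symmetric regrouping
\begin{equation*}
(A_1 - \mu C_1)\, x = \lambda\,(B_1 + \mu D_1)\, x, \qquad (A_2 - \mu C_2)\, y = \lambda\,(B_2 + \mu D_2)\, y
\end{equation*}
exhibits $\lambda$ as the common generalized eigenvalue of another pair of pencils, and the very same Kronecker identity applied with the roles of $\lambda$ and $\mu$ swapped delivers the QEP for $\mu$. The two parts are therefore mirror images of each other, and proving one makes the other a one-line variation.

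The main obstacle I expect is purely bookkeeping: making sure the signs and orientations in the four Kronecker expansions line up with the stated coefficients, in particular that the $\lambda^2$ (respectively $\mu^2$) coefficient $D_1 \otimes B_2 - B_1 \otimes D_2$ emerges with the correct sign. There is no conceptual obstacle because the argument is a direct lift of the linear Atkinson derivation; the only new ingredient is that we let the "fixed" coefficient matrices depend affinely on the remaining parameter, so the constant/linear matrix products become polynomial of degree two after expansion.
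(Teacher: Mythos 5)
Your proposal is correct: the identity you invoke --- if $M_1x=\mu N_1x$ and $M_2y=\mu N_2y$ then $(M_1\otimes N_2-N_1\otimes M_2)(x\otimes y)=0$ --- is immediate from $(M\otimes N)(x\otimes y)=Mx\otimes Ny$, and expanding $\bigl[(A_1-\lambda B_1)\otimes(C_2+\lambda D_2)-(C_1+\lambda D_1)\otimes(A_2-\lambda B_2)\bigr]$ does reproduce \eqref{eq:quad2paramlambda} term by term (constant $A_1\otimes C_2-C_1\otimes A_2$, linear $A_1\otimes D_2-D_1\otimes A_2-B_1\otimes C_2+C_1\otimes B_2$, quadratic $D_1\otimes B_2-B_1\otimes D_2$), and symmetrically for part 2. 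The route differs from the paper's in organization rather than substance: the paper verifies the identity directly, starting from the $\lambda^2$ term and repeatedly substituting $\lambda B_ix=(A_i-\mu C_i-\lambda\mu D_i)x$ and its analogue for $y$ until the cross terms cancel, without ever stating the shared-eigenvalue Kronecker lemma explicitly. Your version packages the same cancellations into a single application of the Atkinson-style elimination for a linear pencil whose coefficients depend affinely on the frozen parameter; this makes the symmetry between the two parts transparent and reduces the proof to bookkeeping, at the cost of having to state and justify the auxiliary lemma. It is worth noting that the paper itself uses exactly your ``absorb the remaining parameter into the coefficient matrices'' device in the proof of Theorem~\ref{thm:2parampoly}, where \eqref{eq:2paramnl2} is cast as an instance of \eqref{eq:2paramnl} with $\mu$-dependent $C_i$ and $D_i$; so your argument is consistent with, and arguably streamlines, the paper's overall methodology.
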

\begin{proof}
 We show the first implication; the second follows by switching the roles of
 $\lambda$ and $\mu$; $B_1$ and $B_2$; and $C_1$ and $C_2$.

 Equation \eqref{eq:quad2paramlambda} holds because 
 \begin{multline*}
\lambda^2(D_1\otimes B_2-B_1\otimes D_2)(x\otimes y)\\[1mm]
\quad =\lambda \, (D_1\otimes (A_2-\mu C_2-\lambda\mu D_2) 
        -(A_1-\mu C_1-\lambda\mu D_1)\otimes D_2)(x\otimes y)\\[1mm]
\quad =\lambda \, (D_1\otimes (A_2-\mu C_2) 
        -(A_1-\mu C_1)\otimes D_2)(x\otimes y)\\[1mm]
\quad =(\lambda \, (D_1\otimes A_2- A_1\otimes D_2)
        +\lambda\mu \, (C_1\otimes D_2-D_1\otimes C_2))(x\otimes y)\\[1mm]
\quad =(\lambda \, (D_1\otimes A_2- A_1\otimes D_2)+
 (C_1\otimes (A_2-\lambda B_2)-(A_1-\lambda B_1)\otimes C_2
)(x\otimes y),
 \end{multline*}
where we used that
\begin{eqnarray*}
&& \lambda\mu \, (C_1\otimes D_2-D_1\otimes C_2)(x\otimes y)\\[1mm]
&& \quad =(C_1\otimes (A_2-\lambda B_2-\mu C_2)-(A_1-\lambda B_1-\mu C_1)\otimes C_2)(x\otimes y)\\[1mm]
&& \quad =(C_1\otimes (A_2-\lambda B_2)-(A_1-\lambda B_1)\otimes C_2)(x\otimes y).
\end{eqnarray*}
\end{proof}

To derive the matrix pencil methods for DDEs with {\em multiple commensurate delays}
(that is, multiple delays that are all integer multiples of of some delay value
$\tau$, i.e., $\tau_k=\tau n_k$ where $n_k\in\NN$)
we will need the following more general polynomial two-parameter eigenvalue problem
\begin{gather}
  \label{eq:2paramnl2} 
 \left\{
  \begin{array}{lll}
    A_1x&=\lambda \ds \sum_{k=0}^m\mu^kB_{1,k}x&+\ds \sum_{k=1}^m\mu^kC_{1,k}x,\\[4mm]
    A_2y&=\lambda \ds \sum_{k=0}^m\mu^kB_{2,k}y&+\ds \sum_{k=1}^m\mu^kC_{2,k}y\\
  \end{array}
\right.
\end{gather}
in the next section.

We now prove the following result that will prove useful.
Associated with this polynomial two-parameter eigenvalue problem
is the following polynomial eigenvalue problem (PEP) for $\mu$. 
\begin{thm}\label{thm:2parampoly}
If $(\lambda,\mu)$ is an eigenvalue of \eqref{eq:2paramnl2} with
 eigenvector $(x,y)$ then
\begin{eqnarray*}
&& \big[(A_1\otimes B_{2,0}-B_{1,0}\otimes A_2)\\
&& \qquad +\sum_{k=1}^m
\mu^k (A_1\otimes B_{2,k}-B_{1,k}\otimes A_2
-C_{1,k} \otimes B_{2,0}+B_{1,0}\otimes C_{2,k})\\
&& \qquad +\sum_{k=1,i=1}^m \mu^{k+i}(
 B_{1,k} \otimes C_{2,i}-C_{1,k}\otimes B_{2,i})
\big] \, (x\otimes y)=0.
\end{eqnarray*}
\end{thm}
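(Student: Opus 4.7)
The plan is to eliminate $\lambda$ by forming a suitable Kronecker combination of the two eigenvalue equations, and then to reorganize the resulting polynomial in $\mu$ so that the single-sum and double-sum terms are separated cleanly.

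First I would introduce the abbreviations $\widetilde B_j(\mu) = \sum_{k=0}^m \mu^k B_{j,k}$ and $\widetilde C_j(\mu) = \sum_{k=1}^m \mu^k C_{j,k}$ for $j=1,2$, so that the two equations of \eqref{eq:2paramnl2} become
\[
(A_1 - \widetilde C_1(\mu))\, x = \lambda \, \widetilde B_1(\mu)\, x,\qquad (A_2 - \widetilde C_2(\mu))\, y = \lambda \, \widetilde B_2(\mu)\, y.
\]
Kronecker-multiplying the first equation (on the right) with $\widetilde B_2(\mu)\, y$ and the second (on the left) with $\widetilde B_1(\mu)\, x$, both right-hand sides become the common vector $\lambda \, \widetilde B_1(\mu) x \otimes \widetilde B_2(\mu) y$. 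Subtracting the two identities eliminates $\lambda$ and yields
\[
\bigl[(A_1 - \widetilde C_1(\mu)) \otimes \widetilde B_2(\mu) - \widetilde B_1(\mu) \otimes (A_2 - \widetilde C_2(\mu))\bigr](x \otimes y) = 0.
\]

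Next I would expand the four Kronecker products by substituting back the definitions of $\widetilde B_j$ and $\widetilde C_j$. The two ``pure $B$'' terms give the single sum $\sum_{k=0}^m \mu^k (A_1 \otimes B_{2,k} - B_{1,k} \otimes A_2)$, while the two mixed terms produce double sums $\sum_{k=1}^m \sum_{i=0}^m \mu^{k+i}\, C_{1,k} \otimes B_{2,i}$ and $\sum_{k=0}^m \sum_{i=1}^m \mu^{k+i}\, B_{1,k} \otimes C_{2,i}$. The key reorganization step is to split off the boundary terms ($i=0$ from the first double sum and $k=0$ from the second), which contribute the extra monomials $-\mu^k\, C_{1,k} \otimes B_{2,0}$ and $\mu^k\, B_{1,0} \otimes C_{2,k}$ for $k=1,\dots,m$. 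Absorbing these into the single sum and isolating the $k=0$ piece $(A_1 \otimes B_{2,0} - B_{1,0} \otimes A_2)$ produces exactly the three groups of terms displayed in the theorem.

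I do not anticipate a genuine obstacle; the only thing that can go wrong is index-bookkeeping in the double sums, so the main care lies in correctly separating the contributions with $i=0$ or $k=0$ from the true double-index remainder $\sum_{k=1,i=1}^m \mu^{k+i}(B_{1,k}\otimes C_{2,i} - C_{1,k}\otimes B_{2,i})$. No nondegeneracy hypothesis on $\widetilde B_1(\mu)$ or $\widetilde B_2(\mu)$ is needed, since the elimination step is purely algebraic: both identities equal the same vector \emph{for the given eigenpair} $(\lambda,\mu,x,y)$, so subtraction is valid unconditionally.
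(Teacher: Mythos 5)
Your proof is correct, but it takes a different route from the paper's. The paper observes that \eqref{eq:2paramnl2} is formally an instance of the cross-term problem \eqref{eq:2paramnl} with the $\mu$-dependent choices $B_i=B_{i,0}$, $D_i=\sum_{k=1}^m\mu^{k-1}B_{i,k}$, $C_i=\sum_{k=1}^m\mu^{k-1}C_{i,k}$, invokes the $\mu$-part of Theorem~\ref{thm:2paramcrossterm}, and then collects powers of $\mu$. You instead eliminate $\lambda$ directly: writing the two equations as $(A_j-\widetilde C_j(\mu))\,\cdot=\lambda\,\widetilde B_j(\mu)\,\cdot$, forming the two Kronecker products whose right-hand sides coincide, subtracting, and expanding. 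The two arguments are algebraically the same elimination (your identity $\bigl[(A_1-\widetilde C_1)\otimes\widetilde B_2-\widetilde B_1\otimes(A_2-\widetilde C_2)\bigr](x\otimes y)=0$ is exactly what the $\mu$-part of Theorem~\ref{thm:2paramcrossterm} encodes for this substitution), but yours is self-contained and arguably more transparent: it makes the structure of the $\mu$-polynomial visible at once and sidesteps the mildly delicate point in the paper's proof that Theorem~\ref{thm:2paramcrossterm} is being applied with coefficient matrices that themselves depend on the eigenvalue parameter $\mu$ (legitimate, since that theorem is a pointwise algebraic identity for the fixed tuple $(\lambda,\mu,x,y)$, but worth a remark). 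The paper's route, on the other hand, emphasizes the structural reduction of the polynomial problem to the quadratic cross-term problem, which is one of the unifying themes of the article. Your index bookkeeping --- splitting off the $i=0$ and $k=0$ boundary terms of the double sums and absorbing them into the single sum --- checks out and reproduces the stated coefficients exactly, and you are right that no nonsingularity assumption on $\widetilde B_1(\mu)$ or $\widetilde B_2(\mu)$ is needed.
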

\begin{proof}
One may check that
\begin{gather*}
  \begin{array}{lll}
    A_1x&=\lambda B_1x+&\mu C_1x+\lambda\mu D_1x,\\
    A_2y&=\lambda B_2y+&\mu C_2y+\lambda\mu D_2y
  \end{array}
\end{gather*} 
if we let $B_i=B_{i,0}$, $D_i=\sum_{k=1}^m \mu^{k-1}B_{i,k}$ and
 $C_i=\sum_{k=1}^m\mu^{k-1}C_{i,k}$ for $i=1,2$. Application of
 Theorem~\ref{thm:2paramcrossterm} yields that
\begin{eqnarray*}
0&=&\big[(A_1\otimes B_{2,0}-B_{1,0}\otimes A_2)\\
&&\quad +\mu \, \big(A_1\otimes \sum_{k=1}^m \mu^{k-1}B_{2,k}-\sum_{k=1}^m \mu^{k-1}B_{1,k}\otimes A_2\\
&&\quad -\sum_{k=1}^m\mu^{k-1}C_{1,k} \otimes B_{2,0}+B_{1,0}\otimes
 \sum_{k=1}^m\mu^{k-1}C_{2,k}\big)\\
&&\quad +\mu^2 \big(\sum_{k=1}^m
 \mu^{k-1}B_{1,k} \otimes \sum_{k=1}^m\mu^{k-1}C_{2,k}-\sum_{k=1}^m\mu^{k-1}C_{1,k}\otimes \sum_{k=1}^m \mu^{k-1}B_{2,k}\big)
\big] \, (x\otimes y)\\
&=&\big[(A_1\otimes B_{2,0}-B_{1,0}\otimes A_2)\\
&&\quad +\big(A_1\otimes \sum_{k=1}^m \mu^{k}B_{2,k}-\sum_{k=1}^m \mu^{k}B_{1,k}\otimes A_2\\
&&\quad -\sum_{k=1}^m\mu^{k}C_{1,k} \otimes B_{2,0}+B_{1,0}\otimes
 \sum_{k=1}^m\mu^{k}C_{2,k}\big)\\
&&\quad +\big(\sum_{k=1}^m
 \mu^{k}B_{1,k} \otimes \sum_{k=1}^m\mu^{k}C_{2,k}-\sum_{k=1}^m\mu^{k}C_{1,k}\otimes \sum_{k=1}^m \mu^{k}B_{2,k}\big)
\big] \, (x\otimes y),
\end{eqnarray*}
which completes the proof.
\end{proof}

\section{Generalizations for neutral systems and multiple delays}\label{sect:gen}
In Section~\ref{sect:illustration} we used the
quadratic two-parameter eigenvalue problem \eqref{eq:quad2param}
to derive the quadratic eigenvalue problems in Theorem~\ref{thm:louisell} and Theorem~\ref{thm:chen}. However, in Section~\ref{sect:illustration} we
limited ourselves to the setting of a single delay DDE.
The original formulations of Theorem~\ref{thm:louisell} \cite{Louisell:2001:IMAG}
and Theorem~\ref{thm:chen} \cite{Chen:1995:DELAYMARGINS}, were stated
for more general types of DDEs, which we will study in this section.
In particular, we discuss neutral systems in Section~\ref{sect:neutral}
and the DDEs with multiple commensurate DDEs in Section~\ref{sect:multiple}.



\subsection{Neutral DDEs}
\label{sect:neutral}

Consider the neutral DDE
\[
 B_0\dot{x}(t)+B_1\dot{x}(t-\tau)=A_0x(t)+A_1x(t-\tau),
\]
where $A_0,A_1,B_0,B_1\in\CC^{n\times n}$. The generality of 
Theorem~\ref{thm:2paramcrossterm} allows us to derive the matrix pencil
methods for this DDE in a similar way as in Section~\ref{sect:illustration}.
With $\lambda=i\omega$ and $\mu=e^{-i\tau\omega}$ we
note that the corresponding eigenvalue problem and its complex conjugate can be
expressed as
\begin{gather}
  \label{eq:quad2paramneut} 
 \left\{
  \begin{array}{llll}
    A_0x      &=\phantom{-}\lambda B_0x & +\lambda\mu B_1x &-\mu A_1x, \\
    \cnj{A}_1y&=-\lambda \cnj{B}_1y & -\lambda\mu \cnj{B}_0y&-\mu\cnj{A}_0y.
  \end{array}
\right.
\end{gather}
After applying Theorem~\ref{thm:2paramcrossterm} 
we derive that  
\begin{multline*}
\big[(-A_0\otimes \cnj{A}_0+A_1\otimes \cnj{A}_1)
+\lambda(-A_0\otimes \cnj{B}_0-B_1\otimes \cnj{A}_1\\
+B_0\otimes \cnj{A}_0+A_1\otimes \cnj{B}_1)+
\lambda^2(-B_1\otimes \cnj{B}_1+B_0\otimes \cnj{B}_0)
\big] \, (x \kron y)=0,  
\end{multline*}
and after rearranging the terms we get
\begin{equation}
 \left((\lambda B_0 - A_0 )\otimes (\lambda \cnj{B}_0 + \cnj{A}_0 ) - (\lambda B_1 - {A_1} )\otimes
 (\lambda \cnj{B}_1 + \cnj{A}_1 )\right)(x\otimes y) = 0.
\end{equation}
This is a slight generalization of the eigenvalue problem presented by
Louisell \cite{Louisell:2001:IMAG}, since in \cite{Louisell:2001:IMAG} it is 
assumed that $B_0=I$ and that the matrices are real.
Louisell, motivated by a connection with a certain differential equation of which
all purely imaginary eigenvalues coincide with purely imaginary 
eigenvalues of the DDE, suggests that \eqref{eq:louisell} can be
determined by solutions of the generalized eigenvalue problem
\[
 \lambda 
 \mtxa{cc}{
     I\otimes I& \ B_1\otimes I\\ I\otimes B_1& \ I\otimes I
 }w=
 \mtxa{rr}{
    A_0\otimes I& \ A_1\otimes I\\ -I\otimes A_1 & \ -I\otimes A_0
 }w.
\]
However, we note that
this is just one possible linearization of \eqref{eq:louisell}; any
of the linearizations in \cite{Mackey:2006:VECT,Mackey:2006:STRUCTURED}
might be considered. Moreover, there also exist numerical methods for quadratic
eigenvalue problems that try to avoid linearization; see 
\cite{Tisseur:2001:QUADRATIC} for an overview.

The second resulting quadratic eigenvalue problem from applying
Theorem~\ref{thm:2paramcrossterm} to \eqref{eq:quad2paramneut} reads
\begin{multline*}
\big[\mu^2(B_1\otimes \cnj{A}_0+A_1\otimes \cnj{B}_0)
+\mu \, (A_0\otimes \cnj{B}_0+B_1\otimes \cnj{A}_1+\\
+A_1\otimes \cnj{B}_1+B_0\otimes \cnj{A}_0)
+(A_0\otimes \cnj{B}_1+B_0\otimes \cnj{A}_1)
\big] \, (x\otimes y)=0.
\end{multline*}
At this point we note that we can interchange all left and right 
operators in the Kronecker products to get a special case of the
result in \cite{Jarlebring:2007:ECC} (where multiple delays are considered).

To determine a relation with a result by
Fu, Niculescu, and Chen \cite{Fu:2006:NEUTRAL} we note that
similarly to the derivation of \eqref{eq:quad2paramneut} using $x$ and
$y=\cnj{x}$, we can also derive a quadratic two-parameter eigenvalue
problem involving $x$ and its corresponding left eigenvector $y$:
\begin{gather}
  \label{eq:quad2paramneut2} 
 \left\{
  \begin{array}{llll}
    A_0x  &=\phantom{-}\lambda B_0x&+\lambda\mu B_1x &-\mu A_1x, \\
    A_1^*y&=-\lambda B_1^*y&-\lambda\mu B_0^*y&-\mu A_0^*y.
  \end{array}
\right.
\end{gather}
Application of Theorem~\ref{thm:2paramcrossterm} yields
\begin{multline*}
\big[(A_0\otimes B_1^*+B_0\otimes A_1^*)
+\mu \, (A_0\otimes B_0^*+B_1\otimes A_1^*+\\
+A_1\otimes B_1^*+B_0\otimes A_0^*)+
\mu^2(B_1\otimes A_0^*+A_1\otimes B_0^*)
\big] \, (x\otimes y)=0.
\end{multline*}
This is a special case of the method in \cite{Fu:2006:NEUTRAL} which
applies to DDEs with multiple commensurate delays, which we be the topic
of the next subsection)\footnote{Note that $B_1$ is defined with an opposite sign in \cite{Fu:2006:NEUTRAL}.}.

\subsection{Multiple delays} \label{sect:multiple}
We now consider the case of DDEs with multiple, say $m>1$, delays.
In the literature there are essentially two ways to handle this situation.
Either the curves or surfaces corresponding to the critical
delays are parameterized using $m-1$ free variables, as is done in for instance
\cite{Jarlebring:2008:JCAM, Ergenc:2007:EXTENDED, Jarlebring:2007:ECC}. In
other approaches, e.g., \cite{Chen:1995:DELAYMARGINS, Fu:2006:NEUTRAL},
it assumed that the delays are commensurate.


Here, we will focus on the case of multiple commensurate delays as the
parameterizations do not yield stability information after the solution
of one eigenvalue problem.

%

Consider the DDE with commensurate delays
\[
  B_0\dot{x}(t)=\sum_{k=0}^mA_kx(t-\tau k).
\]
The associated eigenvalue problem is
\[
   \left(\sum_{k=0}^me^{-\tau k\lambda}A_k-\lambda B_0\right)v=0.
\]
As in the previous section we substitute $\lambda=i\omega$ and
$\mu=e^{-i\tau\omega}$ and consider the complex conjugate of the eigenvalue problem.
After rearrangement of the terms and sums we have
\begin{gather}
  \label{eq:poly2paramcrit} 
 \left\{
  \begin{array}{lll}
    -\cnj{A}_mu    &=\lambda\mu^m \cnj{B}_0u&+\ds \sum_{k=1}^m\mu^k \cnj{A}_{m-k}u, \\[4mm]
    \phantom{-}A_0v&=\lambda B_0v&-\ds \sum_{k=1}^m\mu^k A_kv.
  \end{array}
\right.
\end{gather} 
This is of the same form as the polynomial two-parameter eigenvalue
problem in \eqref{eq:2paramnl2} with 
$A_1=-A_m$, $B_{1,m}=B_0$, $B_{1,k}=0$, 
$k=0,\ldots m-1$, $C_{1,k}=A_{m-k}$, $k=1,\ldots,m$,
$A_2=A_0$, $B_{2,0}=B_0$, $B_{2,k}=0$, $k=1,\ldots m$,  $C_{2,k}=-A_k$, $k=1,\ldots,m$.

%

Theorem~\ref{thm:2parampoly} and several manipulations of the sums yield
\begin{eqnarray*}
0&=&\big[-A_m\otimes B_0+
\mu^m (-B_0\otimes A_0)+
\sum_{k=1}^m
\mu^k (
-A_{m-k} \otimes B_0)\\
&&\qquad \qquad \qquad \qquad \qquad \qquad \qquad \qquad +\sum_{i=1}^m
\mu^{m+i}(
 -B_0 \otimes A_i)
\big] \, (v\otimes u) \\
&=&\big[
-\sum_{k=0}^{m}
\mu^{m-k} (
A_{k} \otimes B_0)-\sum_{i=0}^m
\mu^{m+i}(
 B_0 \otimes A_i)
\big] \, (v\otimes u),
\end{eqnarray*}

which is the eigenvalue problem in derived in \cite{Jarlebring:2008:JCAM}.

%
%
%
%
%
%
%

As in Section~\ref{sect:illustration} and the neutral case
in the previous subsection, we may consider the conjugate transpose instead of the
transpose. This yields the polynomial eigenvalue
problem in \cite[Thm.~3.1]{Chen:1995:DELAYMARGINS}.

Finally, the most general result is for neutral commensurate DDEs. We
show that the eigenvalue problem in Fu, Niculescu, and Chen \cite{Fu:2006:NEUTRAL} also
is a polynomial eigenvalue problem that is connected with a polynomial two-parameter eigenvalue problem.
Although the analysis is similar as for the previous cases, this general
case involves more technicalities and more involved
expressions. Consider the polynomial two-parameter eigenvalue problem
corresponding to the neutral commensurate DDE
\[
\sum_{k=0}^mB_k\dot{x}(t-\tau k)
  =\sum_{k=0}^mA_kx(t-\tau k),
\]
i.e.,
\begin{equation}
\left(A_0-\sum_{k=0}^m \lambda\mu^kB_k+\sum_{k=1}^m\mu^kA_k\right)v=0.\label{eq:CD:mp:commchareq1}
\end{equation}
The complex conjugate transpose is 
\begin{equation}
  \left(\mu^mA_0^*+\sum_{k=0}^m \lambda\mu^{m-k}B_k^*+\sum_{k=1}^m \mu^{m-k}A_k^*\right)u=0.\label{eq:CD:mp:commchareq2}
\end{equation}

We can now combine \eqref{eq:CD:mp:commchareq1} and
\eqref{eq:CD:mp:commchareq2} into a polynomial two-parameter eigenvalue
problem 
\begin{gather}
  \label{eq:CD:poly2paramcrit} 
 \left\{
  \begin{array}{lll}
    \phantom{-}A_0v&=\ds \lambda \sum_{k=0}^m \mu^kB_kv&\ds -\sum_{k=1}^m \mu^k A_kv, \\[4mm]
    -A_m^*u&=\ds \lambda\sum_{k=0}^m \mu^k{B_{m-k}^*} u&\ds +\sum_{k=1}^m \mu^k A_{m-k}^*u.
  \end{array}
\right.
\end{gather} 
This corresponds to \eqref{eq:2paramnl2} with
$A_1=A_0$, $B_{1,k}=B_k$, 
$k=0,\ldots m$, $C_{1,k}=-A_{k}$, $k=1,\ldots,m$,
$A_2=-A_m^*$, $B_{2,k}=B_{m-k}^*$, $k=1,\ldots m$,  $C_{2,k}=A_{m-k}^*$,
$k=1,\ldots,m$.  Theorem~\ref{thm:2parampoly} yields

\begin{eqnarray*}
&& \big[(A_0\otimes B_{m-0}^*+B_0\otimes A_m^*)\\
&& \quad +\sum_{k=1}^m
\mu^k (A_0\otimes B_{m-k}^*+B_k\otimes A_m^*
+A_k\otimes B_{m-0}^*+B_0\otimes A_{m-k}^*)\\
&& \quad +\sum_{k=1,i=1}^m
\mu^{k+i}(
 B_k \otimes A_{m-i}^*+A_k\otimes B_{m-i}^*)
\big] \, (v\otimes u)=0.
\end{eqnarray*}
We note that with some effort it can be verified that the matrix coefficients
$Q_k$ in \cite[Thm.~2]{Fu:2006:NEUTRAL} are exactly the matrix coefficients
that occur in this polynomial eigenvalue problem.

\section{New variants of matrix pencil methods}
\label{sect:new}
In this section, we introduce some new matrix pencil methods, which are
variants of existing approaches. 
For ease of presentation, we will state the results for neutral DDEs
with one delay, but all methods can be generalized for DDEs with multiple
commensurate delays.

Moreover, we will only mention the relevant quadratic two-parameter
eigenvalue problems (which will be polynomial two-parameter eigenvalue problems
for DDEs with multiple commensurate delays);
as we have seen before, every such two-parameter eigenvalue problem
has two associated (one-parameter) eigenvalue problems,
one for $\lambda$ and one for $\mu$ giving two possible resulting
matrix pencil methods.
(For DDEs with multiple commensurate delays there is just one
associated polynomial eigenproblem, for $\mu$.)

The quadratic two-parameter eigenvalue problem for the
the neutral single-delay DDE given in \eqref{eq:quad2paramneut} is just
one of several possible quadratic two-parameter eigenvalue problems.
We can get the following expressions by transposing
none, one, or both equations:
\begin{itemize}
\item[a)] \eqref{eq:quad2paramneut};
\item[b)] \eqref{eq:quad2paramneut} but with the first equation transposed:
\[
 \left\{
  \begin{array}{llll}
    A_0^Tx    &=\phantom{-}\lambda B_0^Tx&+\lambda\mu B_1^Tx&-\mu A_1^Tx,\\
    \cnj{A}_1y&=-\lambda \cnj{B}_1y&-\lambda\mu \cnj{B}_0y  &-\mu\cnj{A}_0y;
  \end{array}
\right.
\]
\item[c)] \eqref{eq:quad2paramneut} but with the second equation transposed:
\begin{gather}
 \label{eq:trans2}
 \left\{
  \begin{array}{llll}
    A_0x  &=\phantom{-}\lambda B_0x&+\lambda\mu B_1x&-\mu A_1x,\\
    A_1^*y&=-\lambda B_1^*y&-\lambda\mu B_0^*y&-\mu A_0^*y;
  \end{array}
\right.
\end{gather}
\item[d)] and \eqref{eq:quad2paramneut} but with both equations transposed:
\[
 \left\{
  \begin{array}{llll}
    A_0^Tx&=\phantom{-}\lambda B_0^Tx&+\lambda\mu B_1^Tx&-\mu A_1^Tx,\\
    A_1^*y&=-\lambda B_1^*y&-\lambda\mu B_0^*y&-\mu A_0^*y.
  \end{array}
\right.
\]
\end{itemize}
Applying any of the two parts of Theorem~\ref{thm:2paramcrossterm}
yields an associated GEP corresponding to a matrix pencil method. As an
additional permutation, the order of the two equations in a
two-parameter eigenvalue problem does not influence the problem and can
be interchanged to yield yet other variants. Hence, in total for the neutral
single delay DDE we find $4\cdot 2\cdot 2=16$ matrix pencil variants.
(For DDEs with multiple delays there is one associated polynomial eigenvalue
problem resulting in 8 variants.)

The methods known in the literature correspond to the following:
\begin{itemize}
\item \cite{Chen:1995:DELAYMARGINS} and \cite{Fu:2006:NEUTRAL}:
\eqref{eq:trans2} and the $\lambda$-part of Theorem~\ref{thm:2paramcrossterm};
\item \cite{Louisell:2001:IMAG}: \eqref{eq:quad2paramneut} and
the $\mu$-part of Theorem~\ref{thm:2paramcrossterm};
\item \cite{Jarlebring:2008:JCAM}, \cite{Jarlebring:2007:ECC},
and \cite{Ergenc:2007:EXTENDED}:
\eqref{eq:quad2paramneut} and the $\lambda$-part of Theorem~\ref{thm:2paramcrossterm}.
\end{itemize}

Finally, we stress that the above list, which contains many
new variants, is more than just an
theoretical encyclopedic description of all possible options.
Depending on the given matrices, the structure
and sparsity patterns of the Kronecker products may differ
which may imply that for certain applications some methods may
be more favorable than others.

\section{Conclusions and outlook}
\label{sec:concl}
We have recognized new types of eigenvalue problems:
quadratic and polynomial two-parameter eigenvalue problems.
Using these problems as a unifying framework,
we have derived associated (one-parameter) quadratic or polynomial
eigenvalue problems that are at the heart of many matrix pencil methods
that are used to analyze asymptotic stability of DDEs. 
This unifying way to derive the matrix pencils in the matrix pencil methods
provides further understanding of these methods
and makes it easier to compare various approaches.
Moreover, we have proposed several new variations on known
matrix pencil methods.

Furthermore, we expect that the recognized framework of
quadratic and polynomial two-parameter eigenvalue problem may lead to
a considerable amount of new research.
First, we want to stress that it has been outside of the scope of this paper
to study theoretical and practical properties of these new types
of eigenvalue problems.
There are many interesting aspects that need further investigation,
such as how to carry over the concept of linearization (as is common
practice for QEPs and PEPs) to these problems.

Second, the matrix pencils constructed by Kronecker products that occur
in the matrix pencil methods are of large dimension by nature, even for
medium-sized problems, which may make efficient computation of
eigenvalues and stability of DDEs very
challenging. We believe that the key to a successful computational approach lies
in a direct attack of the polynomial two-parameter eigenvalue problem,
instead of the corresponding matrix pencils, in the same spirit
as, for instance, \cite{Hochstenbach:2005:TWOPARAMETER} for
the linear two-parameter eigenproblem. 
We leave both of these topics for future work.
Third, we note that an upcoming work by Muhi\v{c} and Plestenjak will
examine relations between quadratic two-parameter eigenvalue problems 
and singular linear two-parameter eigenvalue problems \cite{MPl08}.

\bigskip\noindent{\bf Acknowledgment:}
The authors would like to dedicate this work to Prof.~Henk van der Vorst,
who has always pointed us to new interesting and challenging problems
and has provided inspirational ideas to deal with them.

\bibliographystyle{elsart-num-sort}
\bibliography{fullbib}

\end{document}